\newcommand{\mR}{\mathbb{R}^{3} \times \mathbb{R}^{3}}
\numberwithin{equation}{section}
\theoremstyle{plain}
\newtheorem{theorem}{{\bf Theorem}}[section]
\newtheorem{corollary}[theorem]{Corollary}
\theoremstyle{definition}
\newtheorem{definition}[theorem]{{\bf Definition}}
\theoremstyle{remark}
\numberwithin{equation}{section}
\begin{document}

\title{A direct proof that $\ell_{\infty}^{(3)}$ has generalized roundness zero}

\author{Ian Doust}
\address{School of Mathematics and Statistics, University of New South Wales, Sydney, NSW 2052, Australia}
\email{i.doust@unsw.edu.au}
\author{Stephen S\'{a}nchez}
\address{School of Mathematics and Statistics, University of New South Wales, Sydney, NSW 2052, Australia}
\email{stephen.sanchez@unsw.edu.au}
\author{Anthony Weston}
\address{Department of Mathematics and Statistics, Canisius College, Buffalo, NY 14208, USA}
\email{westona@canisius.edu}
\address{Faculty of Education and Arts, Australian Catholic University, North Sydney, NSW 2060, Australia}
\address{Department of Decision Sciences, University of South Africa, PO Box 392, UNISA 0003, South Africa}
\email{westoar@unisa.ac.za}

\keywords{Generalized roundness, negative type, $L_{p}$-spaces}

\subjclass[2010]{46B20}

\begin{abstract}
Metric spaces of generalized roundness zero have interesting non-embedding properties.
For instance, we note that no metric space of generalized roundness zero is isometric
to any metric subspace of any $L_{p}$-space for which $0 < p \leq 2$.
Lennard, Tonge and Weston gave an indirect proof that $\ell_{\infty}^{(3)}$
has generalized roundness zero by appealing to non-trivial isometric embedding theorems of Bretagnolle
Dacunha-Castelle and Krivine, and Misiewicz. In this paper we give a direct proof
that $\ell_{\infty}^{(3)}$ has generalized roundness zero. This provides insight into the combinatorial
geometry of $\ell_{\infty}^{(3)}$ that causes the generalized roundness inequalities to fail.
We complete the paper by noting a characterization of real quasi-normed spaces of generalized roundness zero.
\end{abstract}
\maketitle

\section{Introduction: negative type and generalized roundness}\label{sec:1}

A notion of generalized roundness for metric spaces was introduced by Enflo \cite{En2} (see Definition \ref{NTGRDEF} (c)).
Enflo constructed a separable metric space of generalized roundness zero that is not uniformly
homeomorphic to any metric subspace of any Hilbert space. This showed that Hilbert spaces are not universal uniform
embedding spaces and thereby settled a question of Smirnov. Enflo's application of generalized roundness to the
uniform theory of Banach spaces remains unique and may, indeed, be regarded as an anomaly. The reason for this is
that generalized roundness is, for all intents and purposes, an isometric rather than uniform invariant. Indeed,
Lennard, Tonge and Weston \cite{LTW} have shown that the generalized roundness and supremal $p$-negative type of
any given metric space coincide. Negative type is a well-known classical isometric invariant whose
origin may be traced back to an 1841 paper of Cayley \cite{Cay}. We recall the relevant definitions here.

\begin{definition}\label{NTGRDEF} Let $p \geq 0$ and let $(X,d)$ be a metric space. Then:
\begin{enumerate}
\item[(a)] $(X,d)$ has $p$-{\textit{negative type}} if and only if for all integers $n \geq 2$,
all finite subsets $\{z_{1}, \ldots , z_{n} \} \subseteq X$, and all choices of real numbers $\zeta_{1},
\ldots, \zeta_{n}$ with $\zeta_{1} + \cdots + \zeta_{n} = 0$, we have:

\begin{eqnarray}\label{NT}
\sum\limits_{i,j = 1}^{n} d(z_{i},z_{j})^{p} \zeta_{i} \zeta_{j} & \leq & 0.
\end{eqnarray}

\item[(b)] $p$ is a \textit{generalized roundness exponent} of $(X,d)$ if and only if for all integers $n > 1$,
and all choices of points $x_{1}, \ldots , x_{n}, y_{1}, \ldots , y_{n} \in X$, we have:

\begin{eqnarray}\label{GR}
\sum\limits_{i,j = 1}^{n} \Bigl\{ d(x_{i},x_{j})^{p} + d(y_{i},y_{j})^{p} \Bigl\} & \leq &
2 \sum\limits_{i,j = 1}^{n} d(x_{i},y_{j})^{p}.
\end{eqnarray}

\item[(c)] The \textit{generalized roundness} of $(X,d)$ is defined to be the supremum of the set of all
generalized roundness exponents of $(X,d)$.
\end{enumerate}
\end{definition}

There is a richly developed theory of negative type metrics that has stemmed from classical papers of
Cayley \cite{Cay}, Menger \cite{Me1, Me2, Me3} and Schoenberg \cite{Sc1, Sc2, Sc3}. Recently there
has been intense interest in negative type metrics due to their usefulness in algorithmic settings.
A prime illustration is given by the \textit{Sparsest Cut problem with relaxed demands} in combinatorial
optimization \cite{Lee, Nao}. There are a number of monographs that provide modern, in-depth, treatments
of the theory of negative type metrics, including Berg, Christensen and Ressel \cite{Ber}, Deza and
Laurent \cite{Dez}, and Wells and Williams \cite{Waw}.

We note that some natural embedding problems involve spaces such as $L_{p}$ with $0 < p < 1$. These spaces carry
the natural quasi-norm $\| \cdot \|_{L_{p}}$ together with the corresponding quasi-metric $d(x, y) = \| x - y \|_{L_{p}}$.
The terminology being used here, however, is not universal. By a \textit{quasi-metric} $d$ on a set $X$ we
mean a function $d$ that satisfies the usual conditions for a metric on $X$, save that the triangle inequality
is relaxed in the following manner: there is a constant $K \geq 1$ such that for all $x,y,z \in X$,
\[
d(x,y) \leq K \cdot \{ d(x,z) + d(z,y) \}.
\]
In the case of $L_{p}$ with $0 < p < 1$, the best possible (smallest) such constant $K$ is $2^{(1-p)/p}$.
The concepts of Definition \ref{NTGRDEF} apply equally well in the broader context of quasi-metric spaces.

It is an important result of Schoenberg \cite{Sc1} that a metric space can be isometrically embedded in some
Hilbert space if and only if it has $2$-negative type. The related problem of embedding Banach
spaces linearly and isometrically into $L_{p}$-spaces was raised by L\'{e}vy \cite{Lev} in 1937. In the case $0 < p \leq 2$,
Bretagnolle, Dacunha-Castelle and Krivine \cite[Theorem 2]{BDK} established that a real quasi-normed space $X$ is linearly
isometric to a subspace of some $L_{p}$-space if and only if $X$ has $p$-negative type. This result was applied
in \cite{BDK} to prove that $L_{q}$ embeds linearly and isometrically into $L_{p}$ if $0 < p < q \leq 2$.
However, in practice it is a hard task to determine if a given real quasi-normed space $X$ has $p$-negative type.

In 1938 Schoenberg
\cite{Sc3} raised the problem of determining those $p \in (0,2)$ for which $\ell_{q}^{(n)}$, $2 < q \leq \infty$, has
$p$-negative type. The cases $q = \infty$ and $2 < q < \infty$ were settled, respectively, by Misiewicz \cite{Mi1}
and Koldobsky \cite{Kol}: if $n \geq 3$ and $2 < q \leq \infty$, then $\ell_{q}^{(n)}$ is not linearly isometric to any
subspace of any $L_{p}$-space for which $0 < p \leq 2$. In other words, by \cite[Theorem 2]{BDK}, $\ell_{q}^{(n)}$
does not have $p$-negative type for any $p > 0$ if $n \geq 3$ and $2 < q \leq \infty$. The restriction $n \geq 3$ on the
dimension of $\ell_{q}^{(n)}$ is essential in this setting. It is well-known that every $2$-dimensional normed space is
linearly isometric to a subspace of $L_{1}$. (See, for example, Yost \cite{Yos}.) In particular, every $2$-dimensional
normed space has generalized roundness at least one.

For any $p \geq 0$, Lennard \textit{et al}.\ \cite[Theorem 2.4]{LTW} have shown that conditions (a) and (b) in
Definition \ref{NTGRDEF} are equivalent. Thus, as noted above, the generalized roundness and supremal $p$-negative type of
any given metric space coincide. It therefore follows from the results in \cite{BDK, Mi1, LTW} that $\ell_{\infty}^{(3)}$
has generalized roundness zero (see \cite[Theorem 2.8]{LTW}). Unfortunately, this indirect proof gives no
insight into the combinatorial geometry of $\ell_{\infty}^{(3)}$ that causes the inequalities (\ref{GR}) to fail
whenever $p > 0$. The main purpose of this paper is to rectify this situation by giving a direct proof that $\ell_{\infty}^{(3)}$
has generalized roundness zero.

The combinatorial geometry which forces $\ell_{\infty}^{(3)}$ to have generalized roundness zero
necessarily carries over to any real quasi-normed space $X$ that contains an isometric copy of $\ell_{\infty}^{(3)}$.
For example, Banach spaces such as $X = c_{0}$ or $C[0,1]$ have generalized roundness zero for the same geometric reasons as $\ell_{\infty}^{(3)}$.

In Section \ref{sec:2} we reformulate Definition \ref{NTGRDEF} (b) in terms of regular Borel measures of compact support.
This facilitates our direct proof that $\ell_{\infty}^{(3)}$ has generalized roundness zero. The argument proceeds by the analysis
of an explicit geometric construction. Equivalently, our arguments give an elementary new proof that $\ell_{\infty}^{(3)}$
does not have $p$-negative type for any $p > 0$. Section \ref{sec:4} completes the paper with a characterization of real
quasi-normed spaces of generalized roundness zero.

\section{A measure-theoretic reformulation of generalized roundness}\label{sec:2}

The purpose of this section is to introduce an equivalent formulation of generalized roundness
that is predicated in terms of measures. The equivalence of conditions (1) and (2) in the statement of
the following theorem is due to Lennard \textit{et al}.\ \cite[Theorem 2.2]{LTW}.

\begin{theorem}\label{equivalence}
Let $(X,d)$ be a metric space and suppose that $p \ge 0$. Then the following are equivalent:
\begin{enumerate}
\item $p$ is a generalized roundness exponent of $(X,d)$.

\item For all integers $N \geq 1$, all finite sets $\{ x_{1}, \ldots, x_{N} \} \subseteq X$,
and all collections of non-negative real numbers $m_{1}, \ldots, m_{N}, n_{1}, \ldots, n_{N}$ that
satisfy $m_{1} + \cdots + m_{N} = n_{1} + \cdots + n_{N}$, we have:
\[
\sum\limits_{i,j = 1}^{N} \bigl\{m_{i}m_{j} + n_{i}n_{j}\bigl\} d(x_{i},x_{j})^{p}
 \leq 2 \sum\limits_{i,j =1}^{N} m_{i}n_{j}d(x_{i},x_{j})^{p}.
\]

\item For all regular Borel probability measures of compact support $\mu$ and $\nu$ on $X$, we have:
  \begin{equation}\label{meas-p}
   \iint_{X \times X} d(x,x')^p \, d\mu(x) d\mu(x')
   +  \iint_{X \times X} d(y,y')^p \, d\nu(y) d\nu(y')
             \le 2 \iint_{X \times X} d(x,y)^p \, d\mu(x) d\nu(y)
  \end{equation}
\end{enumerate}
\end{theorem}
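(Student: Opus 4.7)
The plan is to establish the equivalence $(2) \Leftrightarrow (3)$ directly; the equivalence $(1) \Leftrightarrow (2)$ is already proved by Lennard et al.\ \cite{LTW}. The content of $(3)$ is morally just an integrated version of $(2)$, so the task reduces to normalising to probability measures in one direction and approximating compactly supported probability measures by finitely supported ones in the other.

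For $(3) \Rightarrow (2)$, I would take a finite subset $\{x_1, \ldots, x_N\} \subseteq X$ and non-negative weights $m_i, n_i$ with common total mass $M = \sum_i m_i = \sum_i n_i$. The case $M = 0$ is trivial. Otherwise, set $\mu = M^{-1} \sum_i m_i \delta_{x_i}$ and $\nu = M^{-1} \sum_i n_i \delta_{x_i}$; these are regular Borel probability measures with finite support. Substituting them into \eqref{meas-p} collapses each integral to a finite double sum, and clearing the factor $M^{-2}$ recovers the inequality of $(2)$.

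For $(2) \Rightarrow (3)$, let $\mu, \nu$ be regular Borel probability measures supported in a common compact set $K \subseteq X$ (take $K$ to be the union of the two supports). For each integer $k \geq 1$, I would partition $K$ into finitely many Borel sets $A_1^{(k)}, \ldots, A_{N_k}^{(k)}$ of diameter at most $1/k$, pick a representative point $x_i^{(k)} \in A_i^{(k)}$ whenever $\mu(A_i^{(k)}) > 0$, and set $\mu_k = \sum_i \mu(A_i^{(k)}) \delta_{x_i^{(k)}}$. Uniform continuity of each bounded continuous test function on $K$ then forces $\mu_k \to \mu$ weakly. Constructing $\nu_k \to \nu$ analogously and pooling support points into a single finite list, condition $(2)$ applies to the pair $(\mu_k, \nu_k)$ and yields
\[
\iint_{K \times K} d(x,x')^p \, d\mu_k(x) d\mu_k(x') + \iint_{K \times K} d(y,y')^p \, d\nu_k(y) d\nu_k(y') \leq 2 \iint_{K \times K} d(x,y)^p \, d\mu_k(x) d\nu_k(y).
\]
Because $K \times K$ is compact and $(x,y) \mapsto d(x,y)^p$ is continuous and bounded there, weak convergence of the product measures $\mu_k \otimes \mu_k$, $\nu_k \otimes \nu_k$, and $\mu_k \otimes \nu_k$ to $\mu \otimes \mu$, $\nu \otimes \nu$, and $\mu \otimes \nu$ permits passage to the limit on both sides, delivering \eqref{meas-p}.

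The main obstacle I anticipate is the measure-theoretic limiting argument: verifying that weak convergence of the factors implies weak convergence of the relevant product measures, and arranging the support pooling so that $(2)$ can be applied cleanly to each $(\mu_k, \nu_k)$. Both points are essentially bookkeeping, but they must be handled carefully because condition $(2)$ is only stated for finitely many points with a common index set, while $\mu$ and $\nu$ need not be mutually absolutely continuous.
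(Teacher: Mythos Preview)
Your proposal is correct and follows essentially the same approach as the paper: defer $(1)\Leftrightarrow(2)$ to \cite{LTW}, and for $(2)\Leftrightarrow(3)$ discretize compactly supported probability measures and pass to the limit using compactness and continuity of $d(\cdot,\cdot)^p$. The only cosmetic difference is that the paper argues the hard direction by contrapositive and phrases the discretization as replacing the integrand $d(x,y)^p$ by $d(\alpha_n(x),\alpha_n(y))^p$ for a nearest-point map $\alpha_n$ into a finite $1/n$-net (so that uniform convergence of the integrand, rather than weak convergence of product measures, drives the limit), which sidesteps the product-measure bookkeeping you flagged as the main obstacle.
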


\begin{proof} Let $p > 0$.

Suppose there exists a finite set $\{ x_{1}, \ldots, x_{N} \} \subseteq X$,
and corresponding non-negative real numbers $m_{1}, \ldots, m_{N}, n_{1}, \ldots, n_{N}$ (not all zero)
that satisfy $m_{1} + \cdots + m_{N} = n_{1} + \cdots + n_{N}$, such that
\[
\sum\limits_{i,j = 1}^{N} \bigl\{m_{i}m_{j} + n_{i}n_{j}\bigl\} d(x_{i},x_{j})^{p}
> 2 \sum\limits_{i,j =1}^{N} m_{i}n_{j}d(x_{i},x_{j})^{p}.
\]
By normalization, we may assume that $m_{1} + \cdots + m_{N} = 1 = n_{1} + \cdots + n_{N}$. One may then define
probability measures $\mu$ and $\nu$ on the set $\{ x_{1}, \ldots, x_{N} \}$ as follows: $\mu(\{ x_{i} \}) = m_{i}$ and
$\nu(\{ x_{i} \}) = n_{i}$ for all $i$, $1 \leq i \leq N$. This provides a suitable instance of (\ref{meas-p})
failing.

Conversely, suppose that $\mu$ and $\nu$ are measures such that inequality (\ref{meas-p}) fails. Let $X_0$ be a compact set
containing the support of the two measures. For $n > 0$ let $S_n = \{x_j\}_{j=1}^N$ be a set of points so that the
balls $B(x_i,1/n)$ cover $X_0$. For $x \in X_0$, let $\alpha_n(x)$ equal the element of $S_n$ closest to $x$ (where,
in case of a tie, one takes the element with the smallest index). Let $f_n: X_0 \times X_0 \to \mathbb{R}$ be defined by $f_n(x,y)
= d(\alpha_n(x),\alpha_n(y))^p$. Since the map $(x,y) \mapsto d(x,y)^p$ is uniformly continuous on $X_0 \times X_0$
and $\alpha_n$ converges uniformly to the identity on $X_0$, it is easy to check that $f_n(x,y) \to d(x,y)^p$ uniformly
on $X_0 \times X_0$ and hence that the integrals of $f_n$ with respect to the product measures $\mu \times \nu$, $\nu \times \nu$
and $\mu \times \mu$ converge to the corresponding integrals of $d(\cdot,\cdot)^p$.

Since $f_n$ is a simple function, integrals of it are of the form $\sum_{i,j=1}^n c_{ij} d(x_i,x_j)^p$.
Indeed, if we set $m_i = \mu(\{x \,: \, \alpha_n(x) = x_i\})$ and $n_i = \nu(\{x \,: \, \alpha_n(x) = x_i\})$, then
  \begin{multline}\label{f_n}
     2 \iint_{X \times X} f_n(x,y) \, d\mu(x) d\nu(y)
       - \iint_{X \times X} f_n(x,x') \, d\mu(x) d\mu(x')
      -  \iint_{X \times X} f_n(y,y') \, d\nu(y) d\nu(y')  \\
    = 2 \sum_{i,j=1}^N m_i n_j d(x_i,x_j)^p
       -  \sum_{i,j=1}^N m_i m_j d(x_i,x_j)^p
        - \sum_{i,j=1}^N n_i n_j d(x_i,x_j)^p.
  \end{multline}
Since inequality (\ref{meas-p}) fails, for large enough $n$, the left-hand side of (\ref{f_n}) is negative.
\end{proof}

In the formulation of Definition \ref{NTGRDEF} (b) one may assume that the sets $\{ x_{i} \}$ and $\{ y_{i} \}$ are disjoint.
This is due to cancellation of like terms. In the measure setting, this corresponds to the measures having disjoint support.
We note that analysis of negative type and hypermetric inequalities using measures is not novel. See, for example,
Nickolas and Wolf \cite[Theorem 3.2]{NIC}.

\section{A direct proof that $\ell_{\infty}^{(3)}$ has generalized roundness zero}\label{sec:3}

We now give a direct proof that $\ell_{\infty}^{(3)}$ has generalized roundness $0$ on the basis of a
geometric construction.

\begin{theorem}
For all $p > 0$, $p$ is not a generalized roundness exponent of $\ell_{\infty}^{(3)}$.
\end{theorem}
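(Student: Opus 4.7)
My plan is to apply Theorem \ref{equivalence}, which reduces the claim to showing that, for each fixed $p > 0$, there exist regular Borel probability measures $\mu$ and $\nu$ of compact support on $\ell_\infty^{(3)}$ for which inequality (\ref{meas-p}) fails. By the remark following the theorem, I may further require the supports of $\mu$ and $\nu$ to be disjoint. Equivalently, it suffices to exhibit, for each $p > 0$, a finite set of points $\{z_1,\ldots,z_k\} \subseteq \ell_\infty^{(3)}$ and real numbers $\zeta_1,\ldots,\zeta_k$ with $\zeta_1 + \cdots + \zeta_k = 0$ such that $\sum_{i,j=1}^k \zeta_i \zeta_j\, d(z_i,z_j)^p > 0$, i.e., an explicit failure of $p$-negative type produced from a signed atomic measure $\zeta = \mu - \nu$.

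I would search for such a configuration among weighted sets of points chosen to exploit the cube geometry of the unit ball of $\ell_\infty^{(3)}$: for instance, atoms at the face-center vectors $\pm e_1, \pm e_2, \pm e_3$, at the cube vertices $\{\pm 1\}^3$, and possibly at the origin, with weights depending on parameters that may be chosen as functions of $p$. For symmetric configurations of this type, the three integrals in (\ref{meas-p}) reduce to finite sums in which the pairwise $\ell_\infty$-distances take only a handful of values (typically $1$, $2$, or a tunable scale parameter), and the inequality rewrites as a polynomial inequality in $2^p$ and the weight parameters. The verification is then a finite optimization: choose the parameters $p$-dependently so as to produce a strict violation, and in particular to move the leading behaviour of the difference between the two sides of (\ref{meas-p}) into the positive range.

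The main obstacle is the regime $p \to 0^+$. There, $d(z_i,z_j)^p \to 1$ for every pair of distinct points, so inequality (\ref{meas-p}) degenerates to a combinatorial count of coincidences between the supports; for generic configurations this count favors the right-hand side, and any violation for small $p$ must come from a carefully engineered first-order correction in $p$ which exploits the specific pairwise distances available in $\ell_\infty^{(3)}$. Moreover, since every $2$-dimensional real normed space has generalized roundness at least $1$, the violating configuration must be \emph{genuinely three-dimensional}: no configuration lying in a translate of a $2$-plane of $\ell_\infty^{(3)}$ can witness a failure for $0 < p \leq 1$. The technical heart of the argument is therefore to design an explicit family of configurations (or measures) whose violation persists uniformly over all $p > 0$, thereby exposing the combinatorial geometry of the cube that forces $\ell_\infty^{(3)}$ to have generalized roundness zero.
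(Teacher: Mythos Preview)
Your proposal is a plan rather than a proof: you correctly identify the framework (Theorem~\ref{equivalence}) and the main difficulty (the regime $p \to 0^{+}$), but you never exhibit a configuration that actually violates (\ref{meas-p}). More seriously, the specific approach you outline---a \emph{fixed} finite set of points (face centers, cube vertices, origin) with only the weights $\zeta_i$ depending on $p$---provably cannot succeed for small $p$. Writing $d_{ij}^{p} = 1 + \varepsilon_{ij}(p)$ for $i \ne j$ and using $\sum_i \zeta_i = 0$, one has
\[
\sum_{i,j} \zeta_i \zeta_j\, d_{ij}^{p}
= -\sum_i \zeta_i^{2} + \sum_{i\ne j} \zeta_i \zeta_j\, \varepsilon_{ij}(p),
\]
and the second sum is bounded in absolute value by $\max_{i\ne j}|\varepsilon_{ij}(p)|\cdot\bigl(\sum_i|\zeta_i|\bigr)^{2} \le n\,\max_{i\ne j}|\varepsilon_{ij}(p)|\cdot\sum_i\zeta_i^{2}$, where $n$ is the number of points. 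If the point set is fixed, $\max|\varepsilon_{ij}(p)|\to 0$ as $p\to 0$, so the quadratic form is eventually negative no matter how the weights are chosen. Hence either the number of points or the ratio of largest to smallest pairwise distance must diverge as $p\to 0^{+}$; your ``tunable scale parameter'' would have to carry the entire argument, and you have not said what it is or how it enters.

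The paper resolves this by abandoning finite atomic configurations altogether and taking $\mu$ and $\nu$ to be one-dimensional Lebesgue measure on two pairs of parallel segments of length $2L$: $\mu$ on $\{(t,\pm 1,0):|t|\le L\}$ and $\nu$ on $\{(t,0,\pm 1):|t|\le L\}$. Symmetry makes the $\mu$--$\mu$ and $\nu$--$\nu$ integrals equal, and the remaining double integrals reduce to explicit elementary computations in $|t-s|$, $\max(|t-s|,1)$, and $\max(|t-s|,2)$. The closed-form difference $\Delta(L,p)$ is then shown to be negative at $L=1/p$ for every small $p>0$. The key geometric idea your plan lacks is precisely this diverging length scale $L\to\infty$: it is the growing diameter of the configuration, not a clever choice of weights on a fixed set, that generates the first-order correction needed to overcome the $-\sum_i\zeta_i^{2}$ term as $p\to 0^{+}$.
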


\begin{proof} Let $d$ denote the metric on $\mathbb{R}^{3}$ induced by the norm of $\ell_{\infty}^{(3)}$.
Fix $p > 0$. Fix $L > 2$ and define the sets
  \begin{align*}
   S_\mu &= \{(t,\pm 1,0) \, :\, -L \le t \le L\}, \text{ and} \\
   S_\nu &= \{(t,0,\pm 1) \, :\, -L \le t \le L\}.
  \end{align*}
So each set is made up of a pair of parallel lines of length $2L$. Define the measures $\mu = \mu_L$ and $\nu = \nu_L$
to be one-dimensional Lebesgue measure supported on the corresponding sets $S_\mu$ and $S_\nu$.

Clearly
  \[ \iint_{\mR} d(x,x')^p \, d\mu(x) d\mu(x')
    =  \iint_{\mR}  d(y,y')^p \, d\nu(y) d\nu(y'), \]
and so we just need to show that, for sufficiently large $L$,
  \begin{equation}\label{eq1}
   \iint_{\mR} d(x,y)^p \, d\mu(x) d\mu(y) > \iint_{\mR} d(x,y)^p \, d\mu(x) d\nu(y).
   \end{equation}

Now
  \begin{align*}
  &\iint_{\mR} d(x,y)^p \, d\mu(x) d\mu(y)\\
  &\qquad = \int_{-L}^L \left( \int_{-L}^L \Vert (t,1,0) - (s,1,0) \Vert^p \, ds
                + \int_{-L}^L \Vert (t,1,0) - (s,-1,0) \Vert^p \, ds\right) \,dt \\
  &\qquad\qquad +
     \int_{-L}^L \left( \int_{-L}^L \Vert (t,-1,0) - (s,1,0) \Vert^p \, ds
                + \int_{-L}^L \Vert (t,-1,0) - (s,-1,0) \Vert^p \, ds \right) \,dt \\
  &\qquad= 2 \Bigl( \int_{-L}^L \int_{-L}^L \Vert (t,1,0) - (s,1,0) \Vert^p \, ds\, dt
       +\int_{-L}^L \int_{-L}^L \Vert (t,1,0) - (s,-1,0) \Vert^p \, ds\, dt
       \Bigr) \\
  &\qquad= 2 \Bigl( \int_{-L}^L \int_{-L}^L |t-s|^p \, ds\, dt
       +  \int_{-L}^L \int_{-L}^L \max(|t-s|,2)^p \, ds\, dt \Bigr).
     \end{align*}
For fixed $t$,
  \[ \int_{-L}^L |t-s|^p \, ds = {\frac { \left( t+L \right) ^{p+1}}{p+1}}+{\frac { \left( L-t \right)
^{p+1}}{p+1}}, \]
and so
  \[ T_1 = \int_{-L}^L \int_{-L}^L |t-s|^p \, ds\, dt
   = 8\,{\frac {{2}^{p}{L}^{p+2}}{{p}^{2}+3\,p+2}}.
  \]
The other term needs to be split into pieces. If $-L \le t \le -L+2$, then
  \[  \int_{-L}^L \max(|t-s|,2)^p \, ds
   = {2}^{p} \left( t+2+L \right) +
   {\frac { \left( L-t \right)^{p+1} - {2}^{p+1}}{p+1}},
  \]
and so
  \begin{align*}
\int_{-L}^{-L+2} & \int_{-L}^L  \max(|t-s|,2)^p \, ds\, dt
  = \int_{L-2}^{L}  \int_{-L}^L  \max(|t-s|,2)^p \, ds\, dt \\
   &= - \,{\frac{2^{p+1} \bigl( 2\,{L}^{2} (L-1)^{p}
   -3\,{p}^{2}
   -4\,L (L-1)^{p}
   -{2}\,{L}^{p+2}
   -7\,p
   +2\,( L-1)^{p}
   -2 \bigr)}
   {{p}^{2}+3\,p+2}}.
  \end{align*}
If $-L+2 \le t \le L-2$, then
  \begin{align*}
  \int_{-L}^L \max(|t-s|,2)^p \, ds
   &= \int _{-L}^{t-2}\! \left( t-s \right) ^{p}{ds}+4\cdot{2}^{p}+\int _{t+2}^
{L}\! \left( s-t \right) ^{p}{ds}
 \\
  &= {\frac { \left( t+L \right) ^{p+1}-{2}^{p+1}}{p+1}}+4\cdot{2}^{p}+{\frac
{\left( L-t \right) ^{p+1}-{2}^{p+1}}{p+1}},
  \end{align*}
and so
  \begin{align*}
  \int_{-L+2}^{L-2} & \int_{-L}^L  \max(|t-s|,2)^p \, ds\, dt \\
  &= {\frac{ 2^{p+3} \bigl(
  ( L-1)^{p}{L}^{2}
  + L {p}^{2}
  - 2\,( L-1)^{p} L
  + 2\, L p
  -2\,{p}^{2}
  +(L-1)^{p}
  - 4\,p
  -1 \bigr)}
  {{p}^{2}+3\,p+2}}.
 \end{align*}
Adding the appropriate terms and simplifying
  \[ T_2 = \int_{-L}^{L}  \int_{-L}^L  \max(|t-s|,2)^p \, ds \,dt
   = {\frac{2^{p+2} \bigl(
   {2}\,L{p}^{2}
   +4\,L p
   -{p}^{2}
   +{2}\,{L}^{p+2}
   -p \bigr)}
   {{p}^{2}+3\,p+2}}.
  \]
Combining $T_1$ and $T_2$, we get
 \[  \iint_{\mR} d(x,y)^p \, d\mu(x) d\mu(y)
   = 2\,(T_1 + T_2)
    ={\frac{2^{p+3} \bigl(
   2\,L{p}^{2}
   +4\,Lp
   -{p}^{2}
   +4\,{L}^{p+2}
   -p \bigr)}
   {{p}^{2}+3\,p+2}}.
  \]
We now turn to calculating the right-hand side of (\ref{eq1}). By symmetry
  \[
   \iint_{\mR} d(x,y)^p \, d\mu(x) d\nu(y)
   = 4 \, \Bigl(
    \int_{-L}^{L} \int_{-L}^L \Vert (t,1,0) - (s,0,1) \Vert^p \,ds\,dt
      \Bigr).
  \]
If $-L \le t \le -L+1$, then
  \begin{align*} \int_{-L}^L \Vert (t,1,0) - (s,0,1) \Vert^p \,ds
   &= 1 \cdot (t+1 - (-L)) + \int_{t+1}^L (s-t)^p \, ds \\
   &= t+1+L+{\frac {\left( L-t \right)^{p+1} - 1}{p+1}}.
  \end{align*}
Thus
  \begin{align*}
   &\int_{-L}^{-L+1}  \int_{-L}^L \Vert (t,1,0) - (s,0,1) \Vert^p \,ds\,dt
   = \int_{L-1}^{L} \int_{-L}^L \Vert (t,1,0) - (s,0,1) \Vert^p \,ds\,dt \\
   &\qquad= \frac {{2}^{p+3}{L}^{p+2}
      - 8\, \left( 2\,L-1 \right)^{p}{L}^{2}
       + 8\, \left( 2\,L-1 \right)^{p} L
       +3\,{p}^{2}
       -2\, \left( 2\,L-1 \right)^{p}
       +7\,p
       +2}
       {2({p}^{2}+3\,p+2)}.
  \end{align*}
If $-L+1 \le t \le L-1$, then
  \begin{align*}
   \int_{-L}^L \Vert (t,1,0) - (s,0,1) \Vert^p \,ds
   &= \int _{-L}^{t-1}\! \left( t-s \right)^{p}{ds}
     +2\cdot 1
     +\int _{t+1}^{L}\! \left( s-t \right)^{p}{ds} \\
     &= \frac { \left( t+L \right)^{p+1}-1}{p+1}
        +2
       + \frac {\left( L-t  \right)^{p+1}-1 }{p+1},
    \end{align*}
and so
  \begin{align*}
   &\int_{-L+1}^{L-1} \int_{-L}^L \Vert (t,1,0) - (s,0,1) \Vert^p \,ds\,dt \\
   & \qquad
   = 2\,{\frac {2\,{p}^{2}L
   + 4 \left( 2\,L-1 \right)^{p}{L}^{2}
   - 4 \left( 2\,L-1 \right)^{p}L
   + \left( 2\,L-1 \right)^{p}
   + 4\,p L
   - 1
   - 2\,{p}^{2}
   -4\,p}
   {{p}^{2}+3\,p+2}}.
  \end{align*}
Combining these gives
  \begin{align*}
    \iint_{\mR} d(x,y)^p \, d\mu(x) d\nu(y)
    &= 4 \int_{-L}^L \int_{-L}^L \Vert (t,1,0) - (s,0,1) \Vert^p \,ds\,dt \\
   &= \frac{4 \bigl( 4\,L{p}^{2}+8\,Lp+8\,{2}^{p}{L}^{p+2}-{p}^{2}-p\bigr)}
   {{p}^{2}+3\,p+ 2}.
  \end{align*}
Let
  \[ \Delta(L,p) = \frac{{p}^{2}+3\,p+ 2}{4p} \Bigl(
   \iint_{\mR} d(x,y)^p \, d\mu(x) d\nu(y)
    - \iint_{\mR} d(x,y)^p \, d\mu(x) d\mu(y) \Bigr).
  \]
From the above calculations we see that
$\Delta(L,p) =
   4\,Lp   + {2}^{p+1}p  + 8\,L +   {2}^{p+1}
     - 4\,L{2}^{p}p  -  8\,L{2}^{p} -  p  -  1$.
It remains to show that no matter how small $p$ is, one can choose $L$ so that $\Delta(L,p) < 0$. (Of course, for any fixed $L$,
$\Delta(L,p) \ge 0$ for $p$ near zero.) It suffices to choose $L = 1/p$. To see this note that
  \begin{align*} \Delta(1/p,p) &=
    4 + {2}^{p+1}p  + \frac{8}{p} + {2}^{p+1}
       - 4 \cdot 2^p - \frac{8 \cdot 2^p}{p} - p - 1 \\
  &= p\, \left( {2}^{p+1}-1 \right) + \left(3-{2}^{p+1}\right) +{\frac{8\,(1-{2}^{p})}{p}}.
  \end{align*}
Elementary calculus shows that for $0 < p < 1$,
 \[ p\, \left( {2}^{p+1}-1 \right) < 3,
   \,
      \left(3-{2}^{p+1}\right) < 1, \text{ and}
   \,\,
      \frac{8\,(1-{2}^{p})}{p}  < -8 \ln 2 < -5,
   \]
and hence for any small $p$, $\Delta(1/p,p) < 0$ as required.
\end{proof}

\section{Quasi-normed spaces of generalized roundness zero}\label{sec:4}

In this brief section we return to the theme of isometrically embedding metric spaces into $L_{p}$-spaces.
The theory developed in the papers \cite{Sc2, BDK, LTW} gives rise to the following characterization of
real quasi-normed spaces of generalized roundness zero.

\begin{theorem}\label{qn:zero}
A real quasi-normed space $X$ has generalized roundness zero if and only
if it is not linearly isometric to any subspace of any $L_{p}$-space for which $0 < p \leq 2$.
\end{theorem}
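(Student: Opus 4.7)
The plan is to combine two results already cited in the paper---the Lennard-Tonge-Weston identification \cite{LTW} of generalized roundness with supremal $p$-negative type, and the Bretagnolle-Dacunha-Castelle-Krivine criterion \cite{BDK} that a real quasi-normed space $X$ embeds linearly isometrically in some $L_p$-space with $0 < p \leq 2$ if and only if $X$ has $p$-negative type---glued together by the classical Schoenberg-style monotonicity of negative type.

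For the forward direction, assume $X$ has generalized roundness zero. Then by Lennard-Tonge-Weston the supremal $p$-negative type of $X$ is zero, so $X$ fails to have $p$-negative type for every $p > 0$, and in particular for every $p \in (0, 2]$. Applying Bretagnolle-Dacunha-Castelle-Krivine at each such $p$ rules out a linear isometric embedding of $X$ into any $L_p$-space with $0 < p \leq 2$.

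For the reverse direction, assume $X$ is not linearly isometric to any subspace of any $L_p$-space with $0 < p \leq 2$. By Bretagnolle-Dacunha-Castelle-Krivine, $X$ has no $p$-negative type for any $p \in (0, 2]$. I then invoke the Schoenberg-style monotonicity of negative type: if $(X, d)$ has $p$-negative type, then it has $q$-negative type for every $0 \leq q \leq p$. Suppose, for contradiction, that $X$ had $p_0$-negative type for some $p_0 > 0$. By monotonicity $X$ would then have $\min(p_0, 2)$-negative type, an exponent lying in $(0, 2]$, contradicting the hypothesis. Hence the supremal $p$-negative type of $X$ is zero, and a second application of Lennard-Tonge-Weston yields generalized roundness zero.

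I expect no substantive obstacle, since all three ingredients are standard. The one step worth flagging is the monotonicity of negative type; it is classical (a direct consequence of Schoenberg's exponential-transform characterization) and applies without modification in the quasi-metric---hence quasi-normed---setting, as the excerpt explicitly notes that the negative-type and generalized-roundness definitions carry over to quasi-metric spaces.
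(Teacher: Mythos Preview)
Your proof is correct and uses the same three ingredients as the paper---the Lennard--Tonge--Weston equivalence, the Bretagnolle--Dacunha-Castelle--Krivine criterion, and Schoenberg's interval/monotonicity result---and your reverse direction is essentially identical to the paper's contrapositive argument.

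There is one small structural difference in the forward direction worth noting. You pass through \cite{BDK} in both directions: from generalized roundness zero you deduce failure of $p$-negative type for all $p>0$ and then invoke \cite{BDK} to rule out linear isometric embeddings. The paper instead argues the forward implication purely at the level of metric invariants: it observes that any metric subspace of an $L_p$-space with $0<p\le 2$ has generalized roundness at least $p$ (via \cite[Corollary~2.6]{LTW}), so a space of generalized roundness zero cannot even be \emph{isometric}---let alone linearly isometric---to such a subspace. This buys the paper a slightly stronger metric-space corollary (stated immediately after the theorem), whereas your route, while perfectly adequate for the theorem as stated, uses the linear structure via \cite{BDK} and so does not directly yield that extra corollary.
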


\begin{proof}
$(\Rightarrow)$ Let $X$ be a real quasi-normed space of generalized roundness zero.
It is plain that a metric space of generalized roundness $\wp$ cannot be isometric to any metric space of generalized
roundness $p > \wp$. Thus $X$ is not isometric to any metric space of positive generalized roundness.
The generalized roundness of any metric subspace of any $L_{p}$-space for which $0 < p \leq 2$ is at
least $p$ by \cite[Corollary 2.6]{LTW}. The forward implication is now evident.

$(\Leftarrow)$ We argue the contrapositive. Let $X$ be a real quasi-normed space of positive generalized roundness.
The set of all $p$ for which a given metric space has $p$-negative type is always
an interval of the form $[0, \wp]$ for some $\wp \geq 0$ or $[0, \infty)$ by Schoenberg \cite[Theorem 2]{Sc2}.
So it follows from \cite[Theorem 2.4]{LTW} that $X$ has $p$-negative type for some $p \in (0,2]$.
Thus $X$ is linearly isometric to a subspace of some $L_{p}$-space by \cite[Theorem 2]{BDK}.
\end{proof}

It is worth noting that in the proof of the forward implication of Theorem \ref{qn:zero} the linear
structures of $X$ and $L_{p}$ play no role. We may therefore infer the following corollary from the
argument given above.

\begin{corollary}
If $X$ is a metric space of generalized
roundness zero, then $X$ is not isometric to any metric subspace of any $L_{p}$-space for which $0 < p \leq 2$.
\end{corollary}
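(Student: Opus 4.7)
The plan is to observe that the forward implication of Theorem \ref{qn:zero} never used the linear or quasi-normed structure of $X$, so that the same argument applies verbatim at the level of general metric spaces. Concretely, I would argue by contrapositive: assume $X$ is isometric to some metric subspace of an $L_{p}$-space with $0 < p \leq 2$, and deduce that $X$ has strictly positive generalized roundness.

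The first step is to note that generalized roundness is manifestly an isometric invariant of metric spaces. Indeed, inequality (\ref{GR}) in Definition \ref{NTGRDEF} (b) involves only evaluations of the metric on finite tuples of points, so any isometry between metric spaces transports generalized roundness exponents in both directions. In particular, if two metric spaces are isometric, their generalized roundness (the supremum of the set of generalized roundness exponents) coincide, and so a space of generalized roundness $0$ cannot be isometric to a space of generalized roundness $p > 0$.

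The second step is to invoke \cite[Corollary 2.6]{LTW}, which guarantees that every metric subspace of any $L_{p}$-space with $0 < p \leq 2$ has generalized roundness at least $p$, and in particular strictly positive. Combining the two steps yields the contrapositive: if $X$ is isometric to a metric subspace of such an $L_{p}$-space, then $X$ has generalized roundness at least $p > 0$, contradicting the hypothesis that its generalized roundness is zero.

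There is no genuine obstacle here, since this is essentially the $(\Rightarrow)$ half of Theorem \ref{qn:zero} with the phrase ``real quasi-normed'' deleted and ``linearly isometric to any subspace'' weakened to ``isometric to any metric subspace.'' The only point worth checking is that \cite[Corollary 2.6]{LTW} is formulated at the purely metric level rather than restricted to linear subspaces; this is indeed the case, because the notion of generalized roundness depends only on the ambient distance function and is automatically inherited by every subset.
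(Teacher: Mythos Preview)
Your proposal is correct and follows essentially the same argument as the paper: the corollary is deduced directly from the forward implication of Theorem \ref{qn:zero}, observing that the linear structure of $X$ and of $L_{p}$ played no role there, and invoking \cite[Corollary 2.6]{LTW} together with the isometric invariance of generalized roundness. Your contrapositive framing is just a harmless rephrasing of the same reasoning.
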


\section*{Acknowledgments}

We thank the referee for detailed and thoughtful comments on the preliminary version of this paper.

\bibliographystyle{amsalpha}

\end{document}